\documentclass[11pt]{article}

\usepackage{amsmath,amsthm,amssymb,bm,array,enumerate,tikz-cd}


\parskip=1ex 
\parindent=2em
\jot=0.3em

\usepackage[
top=3cm, bottom=3cm,
outer=3cm, inner=3cm,
headheight=0.4cm, headsep=0.4cm,
footskip=0cm]{geometry}

\usepackage{fancyhdr} 
\pagestyle{fancy}

\fancyhead[RE]{}
\fancyhead[LO]{}
\fancyhead[RO]{\thepage}
\fancyhead[LE]{\thepage}
\fancyhead[C]{}

\fancyfoot[RE]{} 
\fancyfoot[LO]{}
\fancyfoot[RO]{}
\fancyfoot[LE]{}
\fancyfoot[C]{}

\usepackage{titlesec}
\titleformat{\section}{\large\sc\center}{\thesection.}{0.5em}{} 
\titleformat{\subsection}[runin]{\normalsize\bf}{\thesubsection.}{0.4em}{\addperiod}
\titleformat{\subsubsection}[runin]{\normalsize\it}{\thesubsubsection.}{0.4em}{\addperiod}
\newcommand{\addperiod}[1]{#1.}
\titlespacing*{\section}{0pt}{1.3\baselineskip}{0.2\baselineskip}
\titlespacing*{\subsection}{0pt}{0.1\baselineskip}{.3\parindent}
\titlespacing*{\subsubsection}{0pt}{-0.1\baselineskip}{.2\parindent}
\renewcommand{\thesection}{\arabic{section}}
\renewcommand{\thesubsection}{\arabic{section}.\arabic{subsection}}
\renewcommand{\thesubsubsection}{\arabic{section}.\arabic{subsection}.\arabic{subsubsection}}

\usepackage{cite}

\renewcommand{\theequation}{\arabic{section}.\arabic{equation}}


\newtheorem{thm}{Theorem}[section]
\newtheorem{lem}[thm]{Lemma}
\newtheorem{prop}[thm]{Proposition}
\newtheorem{cor}[thm]{Corollary}
\newtheorem{rmk}[thm]{Remark}

\newtheorem{thmalph}{Theorem}

\newtheorem{coralph}[thmalph]{Corollary}

\newtheorem*{thm*}{Theorem}
\newtheorem*{ac*}{Acknowledgement}
\newtheorem*{clm*}{Claim}




\renewcommand{\eqref}[1]{{\upshape(\ref{#1})}}


\def\C{\mathbb{C}}
\def\F{\mathbb{F}}
\def\H{\mathbb{H}}

\def\N{\mathbb{N}}\def\P{\mathbb{P}}
\def\R{\mathbb{R}}

\def\bO{\mathbb{O}}


\def\CC{\mathcal{C}}
\def\FF{\mathcal{F}}

\def\NN{\mathcal{N}}

\def\OO{\mathcal{O}}

\def\cS{\mathcal{S}}

\usepackage[scr=rsfso]{mathalfa}

\def\fra{\mathfrak{a}}

\def\frg{\mathfrak{g}}\def\frn{\mathfrak{n}}


\def\Ga{\Gamma}
\def\la{\lambda}


\def\one{\bm{1}}
\def\zero{\bm{0}}





\def\({\big(}\def\){\big)}



\def\Spin{\textup{Spin}}
\def\upU{\textup{U}}
\def\GL{\textup{GL}}

\def\ad{\textup{ad}}

\def\Ind{\textup{Ind}}\def\Hom{\textup{Hom}}

\def\vol{\textup{vol}}
\renewcommand\Re{\operatorname{Re}}\renewcommand\Im{\operatorname{Im}}
\DeclareMathOperator{\supp}{supp}

\DeclareMathOperator{\diag}{diag}

\DeclareMathOperator{\Isom}{Isom}
\DeclareMathOperator{\Tr}{Tr}
\newcommand\const{\operatorname{const}}

\def\aut{\textup{aut}}\renewcommand{\mod}{\textup{mod}}

\usepackage[hidelinks,bookmarks=false]{hyperref}



\begin{document}

\title{Upper bounds for geodesic periods over rank one locally symmetric spaces}

\author{Jan M\"ollers
\and
Feng Su}

\date{}

\maketitle

\thispagestyle{empty} 

\allowdisplaybreaks

\begin{abstract}
We prove upper bounds for geodesic periods of automorphic forms over general rank one locally symmetric spaces. Such periods are integrals of automorphic forms restricted to special totally geodesic cycles of the ambient manifold and twisted with automorphic forms on the cycles. The upper bounds are in terms of the Laplace eigenvalues of the two automorphic forms, and they generalize previous results for real hyperbolic manifolds to the context of all rank one locally symmetric spaces.
\end{abstract}

\section*{Introduction}

Estimating the restriction of a Laplace eigenfunction $f$ on a Riemannian manifold $X$ to a compact submanifold $Y$ is a classical problem in partial differential equations and global analysis. There are various types of restriction problems, for example, one may estimate $L^p$-norms ($0<p\leq\infty$) of the restriction $f|_Y$ in terms of the eigenvalue of $f$. A more refined quantity in the case $p=2$ are the Fourier coefficients of $f|_Y$ with respect to an orthonormal basis of Laplace eigenfunctions $g$ in $L^2(Y)$. Each Fourier coefficient is given by an integral $P_Y(f,g)$ of $f|_Y$ against $g$, and one can ask for estimates in terms of the eigenvalues of $f$ and $g$.

In the context of locally symmetric spaces, the integrals $P_Y(f,g)$ are also called \textit{periods} and they carry important arithmetic information of $f$ and $g$. One instance of this is when $X$ is an arithmetic hyperbolic surface and $Y$ a closed geodesic, in which case the period integrals $P_Y(f,g)$ are proportional to special $L$-values via Waldspurger's formula. In a series of papers, J.~Bernstein and A.~Reznikov~\cite{BR04,BR05,BR10,Rez08,Rez15} studied various types of period integrals for automorphic forms on hyperbolic surfaces in connection with the representation theory of the corresponding isometry groups of the universal coverings of $X$ and $Y$. More recently, their techniques were generalized to the case of higher-dimensional real hyperbolic manifolds by the authors~\cite{MO13,Su16}, where estimates for $P_Y(f,g)$ either in terms of the eigenvalue of $f$ or in terms of the eigenvalue of $g$ were obtained.

In this work we try to extend the techniques by Bernstein--Reznikov even further and study period integrals for arbitrary locally symmetric spaces $X$ of rank one. In this context, we obtain estimates for $P_Y(f,g)$ in terms of the eigenvalue of $f$ and $g$, for compact totally geodesic cycles $Y$ of a particular form. This includes real, complex and quaternionic hyperbolic manifolds $X$ as well as quotients of the $16$-dimensional octonionic plane in which case $Y$ is an $8$-dimensional real hyperbolic submanifold.

\subsection*{Statement of the results}

Let $X$ be a connected locally symmetric space of rank one. Then the universal cover $\widetilde{X}$ of $X$ is isomorphic to the hyperbolic space $H_\F^n$ over $\F$, where $\F=\R,\C,\H$ ($n\geq2$) or $\F=\bO$ ($n=2$). The real dimension of $X$ is equal to $N=nd$, where $d=\dim_\R\F$. We can write $X=\Gamma\backslash H^n_\F$ for some discrete torsion-free subgroup $\Gamma\subseteq\Isom(H^n_\F)$ of the isometry group of $H^n_\F$. The space $X$ is a complete Riemannian manifold and we denote by $dx$ the Riemannian volume element of $X$ and by $\Delta_X$ the Laplacian on $X$. Then $\Delta_X$ is a self-adjoint operator on $L^2(X)=L^2(X,dx)$.

Consider a compact totally geodesic cycle $Y\subseteq X$ of the form $Y=(\Gamma\cap\Isom(H^m_\F))\backslash H^m_\F$ ($1\leq m<n$), where $H^m_\F$ is naturally embedded into $H^n_\F$. Write $M=md$ for the real dimension of $Y$, $dy$ for its Riemannian volume element and $\Delta_Y$ for the Laplacian on $Y$.

Let $f$ be a normalized square integrable Laplace eigenfunction on $X$ with eigenvalue $a\geq0$, i.e. $\Delta_Xf=af$. Then $f$ is smooth by elliptic regularity. Integrating the restriction $f|_Y$ of $f$ to $Y$ against a normalized eigenfunction $g\in L^2(Y)$ of $\Delta_Y$ with eigenvalue $b\geq0$, i.e. $\Delta_Yg=bg$, defines a period integral
$$ P_Y(f,g) = \int_Y f(y)g(y)\,dy,$$
called \emph{geodesic period}. Since $Y$ is compact, the integral always converges absolutely. We are interested in the asymptotic behavior of $P_Y(f,g)$ as the eigenvalues of $f$ and $g$ grow. Let $\{g_j\}_j$ be an orthonormal basis of $L^2(Y)$ consisting of Laplace eigenfunctions with eigenvalues $0=b_0<b_1\leq b_2\leq\ldots$. Then the period integrals $P_Y(f,g_j)$ are the Fourier coefficients of the expansion of $f|_Y$ into the basis $\{\bar{g}_j\}$ of $L^2(Y)$.

To simplify the statement of our main result, define the modified period
$$ \P_Y(f,g) = |a-b|^{-\frac{N-M-2}{4}}e^{\frac{\pi}{2}\sqrt{|a-b|}}P_Y(f,g). $$

We shall prove the following:

\begin{thmalph}\label{thm:Main}
There exists a constant $C=C_\Gamma>0$ such that
$$ \sum_{b_j\leq T} |\P_Y(f,g_j)|^2 \leq C\cdot a^{-\frac{N-2}{2}}e^{\pi\sqrt{a}}\cdot T^{\frac{N-1}{2}} \qquad \forall\,f\mbox{ with }a\leq T. $$
\end{thmalph}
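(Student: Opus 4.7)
The plan is to adapt the representation-theoretic \emph{diagonal method} of Bernstein--Reznikov, developed for hyperbolic surfaces in \cite{BR04,BR10,Rez08,Rez15} and extended in \cite{MO13,Su16} to higher-dimensional real hyperbolic manifolds, to the general rank-one symmetric pair $(H^n_\F,H^m_\F)$.

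\emph{Setup.} Write $\widetilde X = G/K$ and $\widetilde Y = H/K_H$ with $G=\Isom_0(H^n_\F)$, $H=\Isom_0(H^m_\F)$, $K$ a maximal compact of $G$, and $K_H=K\cap H$. Lift $f$ to a right-$K$-invariant automorphic function $\tilde f$ on $\Gamma\backslash G$ spanning the $K$-fixed line of a spherical representation $\pi_a\subset L^2(\Gamma\backslash G)$, and lift each $g_j$ to the $K_H$-fixed vector of a spherical representation $\sigma_{b_j}$ of $H$ inside $L^2((\Gamma\cap H)\backslash H)$. The geodesic period $P_Y(f,g_j)$ becomes the value, on the pair of spherical vectors, of a continuous $H$-invariant pairing $\ell_j\colon\pi_a|_H\otimes\sigma_{b_j}\to\C$ of automorphic origin.

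\emph{Single-period estimate.} Because $(G,H)$ is a rank-one Gelfand pair, the space of continuous $H$-invariant pairings on $\pi_a|_H\otimes\sigma_{b_j}$ is one-dimensional, so $\ell_j$ is proportional to a canonical local archimedean pairing $\ell_j^{\mathrm{loc}}$. The value of $\ell_j^{\mathrm{loc}}$ on the two spherical vectors reduces, by standard integral formulae, to a Jacobi hypergeometric function $\varphi_\mu^{(\alpha,\beta)}$ of the pair $(H^n_\F,H^m_\F)$ with parameter $\mu\propto\sqrt{|a-b_j|}$ and fixed indices $(\alpha,\beta)$ determined by the root data of $\F$. The uniform Stanton--Tomas-type asymptotic of $\varphi_\mu^{(\alpha,\beta)}$ produces a leading factor of order $|a-b_j|^{\frac{N-M-2}{4}}e^{-\frac{\pi}{2}\sqrt{|a-b_j|}}$, precisely the reciprocal of the modification weight in $\P_Y(f,g_j)$. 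The individual estimate therefore reduces to a uniform control of the proportionality constant between $\ell_j$ and $\ell_j^{\mathrm{loc}}$.

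\emph{Spectral summation.} Choose a smooth nonnegative cutoff $\psi$ with $\psi\geq1$ on $[0,T]$ and Euclidean Fourier transform of compact support. Bessel's inequality gives
$$\sum_{b_j\leq T}|\P_Y(f,g_j)|^2\leq\sum_j\psi(b_j)|\P_Y(f,g_j)|^2,$$
and after inserting the individual bound the right-hand side unfolds into an $L^2$-pairing of $\tilde f$ with itself under a bi-$K_H$-invariant convolution operator on $G$ whose kernel is supported in a small $H$-neighbourhood and incorporates the canonical archimedean pairings. Applying the Harish-Chandra estimate $|\phi_a|\leq Ca^{-(N-2)/4}e^{(\pi/2)\sqrt{a}}$ on the fixed compact support contributes the factor $a^{-(N-2)/2}e^{\pi\sqrt{a}}$; the Plancherel count for the spherical dual of $H$ of parameter $\leq\sqrt{T}$ combined with the per-period bound contributes $T^{(N-1)/2}$; and finiteness of the sum over $\Gamma_H$-double cosets in $\Gamma$ meeting the support of the kernel yields the constant $C_\Gamma$.

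\emph{Main obstacle.} The principal difficulty is producing the uniform Jacobi-function asymptotic with the precise normalization matching the modification weight in $\P_Y$, uniformly across all four types $\F\in\{\R,\C,\H,\bO\}$. The indices $(\alpha,\beta)$, the normalization of the canonical local pairing $\ell_j^{\mathrm{loc}}$, and the associated root multiplicities differ case by case; the octonionic case ($\F=\bO$, $n=2$, $m=1$), where $Y$ is an $8$-dimensional real hyperbolic submanifold of the $16$-dimensional octonionic plane, must be handled separately as an exceptional rank-one symmetric pair.
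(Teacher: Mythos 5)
Your outline reproduces the paper's architecture --- uniqueness of $H$-invariant pairings, proportionality of the automorphic form $\ell^\aut_{f,g}$ to a model form, asymptotics of the model value at spherical vectors, and a summed bound on the proportionality constants (cf. \eqref{eq:Proportionality}, Theorem~\ref{thm:SpecialValue}, Corollary~\ref{cor:AsymptoticsModelPeriod}, Proposition~\ref{thm-1}) --- but your single-period step asserts a false asymptotic, and this is not a cosmetic issue. You claim the local pairing at the spherical vectors is of order $|a-b_j|^{\frac{N-M-2}{4}}e^{-\frac{\pi}{2}\sqrt{|a-b_j|}}$, ``precisely the reciprocal of the modification weight'' in $\P_Y$. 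The actual value is the Gamma-ratio of Theorem~\ref{thm:SpecialValue}, obtained by an explicit reduction of the kernel integral to hypergeometric functions and Gauss summation (the key computation of the paper, with both parameters $\lambda\pm\nu$ entering separately --- there is no single Jacobi parameter $\mu\propto\sqrt{|a-b_j|}$); by Stirling, for $\lambda,\nu\in i\R$ its modulus behaves like $(1+|\lambda|)^{1-\frac{N}{2}}(1+|\lambda^2-\nu^2|)^{\frac{N-M-2}{4}}e^{-\frac{\pi}{2}(\max(|\lambda|,|\nu|)-|\lambda|)}$. The exponential decay is in $(|\nu|-|\lambda|)_+$, not in $\sqrt{|a-b_j|}\approx\sqrt{\abs{\,|\nu|^2-|\lambda|^2\,}}$; since $\sqrt{|\nu|^2-|\lambda|^2}\geq|\nu|-|\lambda|$, your claimed decay is strictly stronger than the truth whenever both parameters are large and comparable (for $|\nu|=2|\lambda|$ the true decay is $e^{-\frac{\pi}{2}|\lambda|}$ versus your $e^{-\frac{\sqrt{3}\pi}{2}|\lambda|}$). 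So the weights do not cancel ``precisely'': the mismatch between $e^{\frac{\pi}{2}\sqrt{|a-b_j|}}$ and $e^{-\frac{\pi}{2}(|\nu_j|-|\lambda|)_+}$, controlled by the elementary inequality $\sqrt{|a-b_j|}\leq\max(|\lambda|,|\nu_j|)+O(1)$, together with the square of the factor $(1+|\lambda|)^{1-\frac{N}{2}}e^{\frac{\pi}{2}|\lambda|}$, is exactly what produces the loss $a^{-\frac{N-2}{2}}e^{\pi\sqrt{a}}$ on the right-hand side of the theorem. Under your accounting one would get $|\P_Y(f,g_j)|\approx|b_{f,g_j}|$ with no $a$-dependence at all, which is false (take $b_j$ bounded and $a$ large: the modified period genuinely carries an extra $e^{\frac{\pi}{2}\sqrt{a}}$).

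The summation step is also unsound as a substitute for Section~\ref{estimate}. The ``Harish-Chandra estimate $|\phi_a|\leq Ca^{-\frac{N-2}{4}}e^{\frac{\pi}{2}\sqrt{a}}$'' is vacuous for tempered parameters (spherical functions are bounded by $1$), and no sup-norm bound on $f$ enters the proof: as explained above, that factor comes from the Gamma asymptotics, not from the summation. Moreover, the modification weights grow exponentially in $j$, so they cannot be absorbed into a cutoff $\psi$ whose Fourier transform has compact support; a Kuznecov-type kernel unfolding can bound the raw periods $P_Y(f,g_j)$ but not the modified ones $\P_Y(f,g_j)$, and your ``Plancherel count'' does not bound the quantities that actually need bounding, namely the proportionality constants. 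What the paper does instead is: Bessel's inequality on $L^2(\Gamma_H\backslash H)$ gives $\sum_j|b_{f,g_j}|^2H^\mod_{\lambda,\nu_j}(u)\leq H^\aut_f(\theta_u)$ \eqref{eq:InequalityHermitianForms}, and the power of $T$ is then extracted from an explicit test-vector construction (Lemma~\ref{lem}): the parabolically rescaled bump $u_T(x,z)=T^{\frac{p+2q}{2}}u(T(x-e_{n-1}),T^2z)$ with $\int u_T\asymp T^{-\rho}$, yielding $\sum_{|\nu_j|\leq T}|b_{f,g_j}|^2\leq C T^{2\rho}$ (Proposition~\ref{thm-1}). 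Your proposal has no substitute for this construction, which is where the exponent of $T$ actually comes from. Finally, a minor point: the octonionic case requires no separate treatment --- the kernels of \cite{MOO16} and the computation in Section~\ref{special} are uniform in $d=\dim_\R\F$, including $(G,H)=(F_{4(-20)},\Spin(8,1))$.
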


For the case where one of the two eigenfunctions $f$ and $g$ is fixed, Theorem~\ref{thm:Main} implies the following bounds for the original geodesic periods $P_Y(f,g)$:

\begin{coralph}\label{cor:Main}
\begin{enumerate}[(1)]
\item\label{cor:Main1} For a fixed Laplace eigenfunction $f\in L^2(X)$ there exists a constant $C=C_{\Gamma,f}>0$ such that
$$ |P_Y(f,g_j)| \leq C\cdot b_j^{\frac{2N-M-3}{4}}e^{-\frac{\pi}{2}\sqrt{b_j}} \qquad \mbox{as $j\to\infty$.} $$
\item\label{cor:Main2} For a fixed Laplace eigenfunction $g\in L^2(Y)$ there exists a constant $C=C_{\Gamma,g}>0$ such that
$$ |P_Y(f,g)| \leq C\cdot a^{\frac{N-M-1}{4}} \qquad \forall\,f. $$
\end{enumerate}
\end{coralph}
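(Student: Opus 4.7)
The plan is to derive both parts of the corollary directly from Theorem~\ref{thm:Main} by keeping only a single term on the left-hand side of the sum inequality and then unpacking the definition of the modified period $\P_Y(f,g)$.

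For part~\eqref{cor:Main1} we fix $f$, hence $a$, and take $j$ large enough that $b_j \geq a$. Applying Theorem~\ref{thm:Main} with $T = b_j$ and retaining only the single term $b_k = b_j$ on the left yields $|\P_Y(f,g_j)|^2 \lesssim b_j^{\frac{N-1}{2}}$, where all $a$-dependent factors are absorbed into the constant $C_{\Gamma,f}$. Solving the definition of $\P_Y$ for $P_Y$ gives
$$ |P_Y(f,g_j)| = |a-b_j|^{\frac{N-M-2}{4}}e^{-\frac{\pi}{2}\sqrt{|a-b_j|}}|\P_Y(f,g_j)|, $$
and the asymptotics $|a-b_j| \sim b_j$ and $\sqrt{|a-b_j|} = \sqrt{b_j} + O(b_j^{-1/2})$ as $j \to \infty$ combine to deliver the desired bound with exponent $\frac{2N-M-3}{4}$.

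For part~\eqref{cor:Main2} we fix $g = g_{j_0}$, hence $b = b_{j_0}$, and consider $f$ with $a \geq b$. Applying Theorem~\ref{thm:Main} with $T = a$ and keeping only the single term $j = j_0$ gives
$$ |\P_Y(f,g)|^2 \leq C\cdot a^{-\frac{N-2}{2}}e^{\pi\sqrt a}\cdot a^{\frac{N-1}{2}} = C\cdot a^{\frac{1}{2}}e^{\pi\sqrt a}, $$
so $|\P_Y(f,g)| \lesssim a^{\frac{1}{4}}e^{\frac{\pi}{2}\sqrt a}$. The identity $\sqrt a - \sqrt{a-b} = b/(\sqrt a + \sqrt{a-b})$ shows that $e^{\frac{\pi}{2}(\sqrt a - \sqrt{a-b})}$ stays bounded as $a \to \infty$, and for $a \geq 2b$ we also have $|a-b|^{\frac{N-M-2}{4}} \lesssim a^{\frac{N-M-2}{4}}$ (the signs of $N-M-2$ being treated separately); combining these factors yields $|P_Y(f,g)| \lesssim a^{\frac{N-M-1}{4}}$. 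The remaining bounded regime $a < 2b$ contains only finitely many $L^2$-eigenvalues $a$, each of finite multiplicity, and is absorbed into $C_{\Gamma,g}$ via a crude Sobolev trace estimate.

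Conceptually the corollary is immediate from Theorem~\ref{thm:Main}: the main technical work has already been done there. The only point requiring care is the bookkeeping in the transitional regime where $|a-b|$ is comparable to but strictly smaller than $\max(a, b_j)$; this is handled uniformly by the identity $\sqrt a - \sqrt{a-b} = b/(\sqrt a + \sqrt{a-b})$ together with the trivial estimate $|a-b_j|^{\frac{N-M-2}{4}} \lesssim b_j^{\frac{N-M-2}{4}}$ in the regime $b_j \gg a$.
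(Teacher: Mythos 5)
Your proposal is correct and follows essentially the same route as the paper, whose entire proof is the remark that Corollary~\ref{cor:Main} ``easily follows from Theorem~\ref{thm:Main} by fixing one of the Maass forms'' --- i.e., exactly your single-term extraction from the sum, followed by unwinding the definition of $\P_Y(f,g)$, with the exponents $\frac{N-M-2}{4}+\frac{N-1}{4}=\frac{2N-M-3}{4}$ and $\frac{N-M-2}{4}+\frac{1}{4}=\frac{N-M-1}{4}$ checking out. Your handling of the regime $a<2b$ via a Sobolev trace estimate (which alone suffices, whether or not only finitely many eigenvalues lie there) is a detail the paper leaves implicit, and it is needed precisely because the factor $|a-b|^{\frac{N-M-2}{4}}$ can degenerate when $a$ is near $b$.
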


We remark that our results hold for all locally symmetric spaces of rank one (not necessarily compact), in particular also for quotients $X$ of the octonionic plane $H^2_\bO$ in which case $Y$ is a compact $8$-dimensional hyperbolic manifold.

\subsection*{Relation to previous work}

Let us compare our results to the existing literature:
\begin{itemize}
\item 
Burq--G\'{e}rard--Tzvetkov~\cite{BGT07}
estimated the $L^p$-norm ($2\leq p\leq\infty$) of $f|_Y$ for general compact Riemannian manifolds $X$ and compact submanifolds $Y$. The resulting bound for a single period is weaker than Corollary~\ref{cor:Main}~\eqref{cor:Main2}, which is to be expeceted as our results are for a very special family of Riemannian manifolds.
\item
Zelditch \cite{Ze92} estimated asymptotics of the period $P_Y(f,1)$ (i.e. $g$ is a constant function) where $X$ is a compact Riemannian manifold and $Y$ is any compact submanifold. The resulting bound for $P_Y(f,1)$ is the same as in Corollary~\ref{cor:Main}~\eqref{cor:Main2}.
\item 
M\"{o}llers--{\O}rsted~\cite{MO13} estimated the period $P_Y(f,g)$ where $X$ is a real hyperbolic manifold (not necessarily compact) and $Y$ a special totally geodesic cycle of any dimension. In their work the form $f$ is fixed and $g$ varies, and the obtained bound is a special case of Corollary~\ref{cor:Main}~\eqref{cor:Main1}.
\item 
Su~\cite{Su16} estimated the period $P_Y(f,g)$ where $X$ is a hyperbolic manifold (not necessarily compact) and $Y$ a special totally geodesic cycle of codimension 1. Here the form $f$ varies and $g$ is fixed, and the obtained bound is a special case of Corollary~\ref{cor:Main}~\eqref{cor:Main2}.
\end{itemize}

\subsection*{Method of proof}

Each automorphic form $f\in L^2(\Gamma\backslash H^n_\F)$ corresponds to an irreducible unitarizable representation $\pi_f$ of the isometry group $G=\upU(1,n;\F)$ of the hyperbolic space $H^n_\F$. Then $H=\upU(1,m;\F)\times\upU(n-m;\F)\subseteq G$ is the subgroup of isometries leaving the hyperbolic subspace $H^m_F\subseteq H^n_\F$ invariant, and automorphic forms $g\in L^2((\Gamma\cap H)\backslash H^m_\F)$ correspond to certain irreducible unitarizable representations $\tau_g$ of $H$. The period integral $P_Y(f,g)$ can be interpreted as the special value of an $H$-invariant bilinear form on the product of $\pi_f$ and $\tau_g$. In this context, invariant bilinear forms are unique up to scalars and can therefore be related to invariant bilinear forms on explicit models of the representations $\pi_f$ and $\tau_g$. Such model forms are constructed in \cite{MOO16} for principal series representations in terms of explicit integral kernels, so that the period integrals $P_Y(f,g)$ can be written as the product of a certain explicit model integral and a proportionality scalar relating the two invariant bilinear forms (an idea due to Bernstein--Reznikov). Whereas the proportionality scalar can be estimated by standard techniques (see Section~\ref{estimate}), the evaluation of the model integral is the key computation in this paper (see Section~\ref{special}).

\subsection*{Structure of the paper}

In Section \ref{sec:GeodesicPeriods} we give a group-theoretic description of locally symmetric spaces of rank one and their special totally geodesic cycles. Also we introduce the automorphic invariant bilinear forms arising from geodesic periods. In Section \ref{representation} we recall the structure and representation theory of rank one groups. The non-compact models of the representations and invariant bilinear forms on these models are discussed. The geodesic periods are expressed as a product of a special value of the model invariant bilinear form and a proportionality scalar. The latter two objects are dealt with in Section \ref{special} and \ref{estimate}, respectively. The necessary integral formulas used in Section \ref{special} are summarized in the Appendix.\newpage

\section{Geodesic periods on rank one locally symmetric spaces}\label{sec:GeodesicPeriods}

\subsection{Locally symmetric spaces of rank one}\label{geometry}

Let $X$ be a connected locally symmetric space of rank one. Then its universal covering $\widetilde{X}$ is a globally symmetric space of rank one and hence of the form $\widetilde{X}=G/K$, where $G=\upU(1,n;\F)$ with $\F=\R,\C,\H$ ($n\geq2$) or $\F=\bO$ ($n=2$) and $K=\upU(1;\F)\times\upU(n;\F)$ the standard maximal compact subgroup of $G$. The group $G$ acts on $\widetilde{X}$ by isometries, and conversely every isometry of $\widetilde{X}$ is given by an element of $G$. We can therefore identify the fundamental group $\Gamma=\pi_1(X)$ of $X$ with a torsion-free discrete subgroup of $G$ so that $X=\Gamma\backslash G/K$.

The symmetric space $\widetilde{X}$ is a Riemannian manifold with the metric induced by the Killing form $\kappa$ on the Lie algebra $\frg$ of $G$ which we normalize to be
$$ \kappa(X,Y) = \frac{1}{2}\Tr(XY^*), \qquad X,Y\in\frg.$$
This metric descends to $X$ and gives rise to a Riemannian measure $dx$ of $X$. Denote by $\Delta_X$ the Laplacian on $X$ determined by the Riemannian metric. Then $\Delta_X$ extends to a self-adjoint operator on $L^2(X)$.

\subsection{Automorphic representations}

Every eigenfunction $f\in L^2(X)$ of $\Delta_X$ is automatically smooth since $\Delta_X$ is an elliptic operator. In view of the isomorphism $L^2(X)\simeq L^2(\Gamma\backslash G)^K$ we can regard $f$ as a right-$K$-invariant $L^2$-function on $\Gamma\backslash G$. The space $\Gamma\backslash G$ is equipped with a $G$-invariant Radon measure that descends to $dx$ (so that the quotient integral formula holds). Thus the action of $G$ given by right translation induces a unitary representation of $G$ on $L^2(\Gamma\backslash G)$. The closed subspace generated by all translates of $f$ is an irreducible subrepresentation of $L^2(\Gamma\backslash G)$, in which $f$ is the (up to scalar multiples) unique $K$-invariant vector.

Write $V_f\subseteq L^2(\Gamma\backslash G)$ for the space of smooth vectors of the subrepresentation generated by $f$, and $\pi_f$ for the action of $G$ on $V_f$. Since the Lie algebra $\mathfrak{g}$ of $G$ acts on $\Gamma\backslash G$ by vector fields, we have $V_f\subseteq C^\infty(\Gamma\backslash G)$ by the Sobolev Embedding Theorem.

\subsection{Geodesic cycles}

For $G=\upU(1,n;\F)$ we consider subgroups of the form $H=\upU(1,m;\F)\times\upU(n-m;\F)$ ($1\leq m\leq n-1$). Note that for $G=\upU(1,2;\bO)=F_{4(-20)}$ we have $H=\Spin(8,1)$. Then $(G,H)$ forms a symmetric pair. More precisely, $H=G^\sigma$ is the fixed point subgroup of the involution $\sigma$ of $G$ given by conjugation with the element $\diag(\one_{m+1},-\one_{n-m})$. The intersection $K_H=K\cap H=\upU(1;\F)\times\upU(m;\F)\times\upU(n-m;\F)$ is a maximal compact subgroup of $H$ and $H/K_H$ a rank one Riemannian symmetric space. The natural embedding $H/K_H\hookrightarrow G/K$ identifies $H/K_H$ with a totally geodesic submanifold of $G/K$.

Assume that $\sigma\Gamma=\Gamma$, then the intersection $\Gamma_H=\Gamma\cap H=\Gamma^\sigma\subseteq\Gamma$ is a torsion-free discrete subgroup of $H$ and the rank one locally symmetric space $Y=\Gamma_H\backslash H/K_H$ can be viewed as a totally geodesic submanifold of $X$. We denote by $dy$ the corresponding Riemannian measure on $Y$ and by $\Delta_Y$ the Laplacian.

Like in the case of $X$, every eigenfunction $g$ of $\Delta_Y$ generates an irreducible subrepresentation of $L^2(\Gamma_H\backslash H)$ whose subspace of smooth vectors is denoted by $W_g\subseteq C^\infty(\Gamma_H\backslash H)$. We write $\tau_g$ for the action of $H$ on $W_g$.

\subsection{Geodesic periods}

Let $f\in L^2(X)$ be a Maass form on $X$, i.e. $f$ is a square integrable eigenfunction of $\Delta_X$. Write $\Delta_Xf=af$ where $a\geq0$. Let $g\in L^2(Y)$ be a Maass form on $Y$ such that $\Delta_Yg=bg$ where $b\geq0$. Assuming that $Y$ is compact, the period integral
$$ P_Y(f,g) = \int_Yf(y)g(y)\,dy$$
converges absolutely. The purpose of this paper is to find estimates for $P_Y(f,g)$ in terms of $a$ and $b$. Throughout the paper we normalize $f$ and $g$ such that $\|f\|_{L^2(X)}=\|g\|_{L^2(Y)}=1$.

\subsection{Automorphic invariant bilinear forms}

We can view $P_Y(f,g)$ as the special value of an invariant bilinear form on $V_f\times W_g$. More precisely, define
$$ \ell^\aut_{f,g}:V_f\times W_g\to\CC, \quad (\phi,\psi)\mapsto\int_{\Gamma_H\backslash H}\phi(h)\psi(h)\,dh, $$
where $dh$ denotes the $H$-invariant measure on $\Gamma_H\backslash H$ that descends to $dy$, then
$$ P_Y(f,g) = \ell^\aut_{f,g}(f,g). $$
The integral above always converges since $\phi$, $\psi$ are smooth and $\Gamma_H\backslash H$ is compact. As a consequence of the $H$-invariance of $dh$, the bilinear map $\ell^\aut_{f,g}$ is invariant under the action $\pi_f|_H\otimes\tau_g$ of $H$, i.e.
$$ \ell^\aut_{f,g} \in \Hom_H(\pi_f|_H\otimes\tau_g,\C). $$

\section{Representation theory of rank one reductive groups}\label{representation}

We recall the classification of spherical unitary representations of the rank one groups $G=\upU(1,n;\F)$, $\F=\R,\C,\H,\bO$. For the subgroups $H=\upU(1,m;\F)\times\upU(n-m;\F)$ we describe the $H$-invariant bilinear forms on products of such representations of $G$ and $H$ as obtained in \cite{MOO16}.

\subsection{Group decompositions}

Let $\fra=\R H_0\subseteq\frg$ with
$$ H_0 = \left(\begin{array}{ccc}0&1&\\1&0&\\&&\zero_{n-1}\end{array}\right)\in\frg, $$
then $\ad(H_0)$ acts on $\frg$ with eigenvalues $\{0,\pm1\}$ for $\F=\R$, and eigenvalues $\{0,\pm1,\pm2\}$ for $\F=\C,\H,\bO$. Let $\alpha\in\fra_\C^*$ be given by $\alpha(H_0)=1$ and put
$$ \frn = \frg_\alpha+\frg_{2\alpha}, \qquad \overline{\frn} = \frg_{-\alpha}+\frg_{-2\alpha}. $$
Note that $\frg_{\pm2\alpha}=\{0\}$ for $\F=\R$. Denote by $A=\exp(\fra)$, $N=\exp(\frn)$ and $\overline{N}=\exp(\overline{\frn})$ the corresponding analytic subgroups of $G$. We identify $\F^{n-1}\oplus\Im\F\simeq\overline{N}$ by
$$ (x,z)\mapsto\overline{n}_{(x,z)} = \exp\left(\begin{array}{ccc}z&z&x^*\\-z&-z&-x^*\\x&x&\zero_{n-1}\end{array}\right)=\left(\begin{array}{ccc}1+z+\frac{1}{2}|x|^2&z+\frac{1}{2}|x|^2&x^*\\-z-\frac{1}{2}|x|^2&1-z-\frac{1}{2}|x|^2&-x^*\\x&x&\one_{n-1}\end{array}\right). $$
Then
$$ \overline{n}_{(x,z)}\cdot\overline{n}_{(x',z')}=\overline{n}_{(x,z)\cdot(x',z')}, $$
where
$$ (x,z)\cdot(x',z')= (x+x',z+z'+\tfrac{1}{2}(x^*x'-x^{\prime\,*}x)). $$
In particular, $\overline{n}_{(x,z)}^{-1}=\overline{n}_{(-x,-z)}$. Further, let
$$ M=Z_K(A)=\{\diag(z,z,k):z\in\upU(1;\F),k\in\upU(n-1;\F)\}\simeq\upU(1;\F)\times\upU(n-1;\F), $$
then $P=MAN$ is a (minimal) parabolic subgroup of $G$. Let $\rho=\frac{1}{2}\Tr\ad|_{\frn}$ be the half sum of positive roots.

We have the Iwasawa decomposition $G=KAN\simeq K\times A\times N$ and therefore, for every $g\in G$ there exists a unique $H(g)\in\fra$ such that $g\in Ke^{H(g)}N$. An easy computation shows that
$$ H(\overline{n}_{(x,z)}) = ((1+|x|^2)^2+4|z|^2)^{\frac{1}{2}}H_0, \qquad (x,z)\in\F^{n-1}\oplus\Im\F. $$

The subset $\overline{N}MAN\simeq\overline{N}\times M\times A\times N$ is open and dense in $G$, according to the Bruhat decomposition.

\subsection{The symmetric pair}

The parabolic subgroup $P$ is compatible with the subgroup $H$ in the sense that $P_H=P\cap H$ is a (minimal) parabolic subgroup of $H$. In fact, $P_H=M_HAN_H$ with $M_H=M\cap H$ and $N_H=N\cap H$. Let $\rho_H=\frac12\Tr\ad|_{\frn_H}$. The opposite nilradical $\overline{N}_H$ of $\overline{N}$ identifies with $\F^{m-1}\oplus\Im\F$ under the isomorphism $\F^{n-1}\oplus\Im\F\simeq\overline{N}$, where $\F^{m-1}\subseteq\F^{n-1}$ as the first $m-1$ coordinates.

\subsection{Principal series representations}

Identify $\fra_\C^*\simeq\C$ by $\lambda\mapsto\lambda(H)$ so that $\rho=\frac{1}{2}((n-1)d+2(d-1))=\frac{1}{2}(nd+d-2)$, where $d=\dim_\R\F$. For $\lambda\in\fra_\C^*$ we consider the principal series representations (smooth normalized parabolic induction)
$$ \pi_\lambda = \Ind_P^G(\one\otimes e^\lambda\otimes\one). $$
Since $\overline{N}MAN$ is open dense in $G$, any smooth right-$P$-equivariant function on $G$ is uniquely determined by its values on $\overline{N}$. We can therefore realize $\pi_\lambda$ on a space $I(\lambda)$ of smooth functions on $\overline{N}\simeq\F^{n-1}\oplus\Im\F$. More precisely, we have $\cS(\overline{N})\subseteq I(\lambda)\subseteq C^\infty(\overline{N})$ where $\cS(\overline{N})$ denotes the space of Schwarz functions on $\overline{N}$. The $K$-spherical vector in $I(\lambda)$ is given by the function
$$ \phi_\lambda(x,z) = ((1+|x|^2)^2+4|z|^2)^{-\frac{\lambda+\rho}{2}}, \qquad (x,z)\in\F^{n-1}\oplus\Im\F. $$

The representation $\pi_\lambda$ is irreducible and unitarizable if and only if $\lambda\in i\R\cup(-R,R)$, where the constant $R$ is given by
$$ R = \begin{cases}\rho&\mbox{for $\FF=\R,\C$,}\\\frac{1}{2}((n-1)d+2)&\mbox{for $\FF=\H,\mathbb{O}$.}\end{cases} $$
For $\lambda\in i\R$ the invariant inner product on $\pi_\lambda$ is the $L^2$-inner product of $L^2(\F^{n-1}\oplus\Im\F)$ with respect to the Lebesgue measure on $\F^{n-1}\oplus\Im\F\simeq\R^{(n-1)d+(d-1)}=\R^{nd-1}$. The irreducible unitary representations of $G$ with a non-zero $K$-spherical vector are precisely the unitary completions of $\pi_\lambda$ (where $\lambda\in i\R\cup(-R,R)$) together with the trivial representation. Note that $\pi_\lambda\simeq\pi_{-\lambda}$ for those parameters.

If now $f\in L^2(X)$ is an eigenfunction of the Laplacian $\Delta_X$, then $\pi_f$ is an irreducible unitarizable representation with $K$-spherical vector $f$, hence $\pi_f\simeq\pi_\lambda$ for some $\lambda\in i\R\cup(-R,R)$ or $\pi_f$ is the trivial representation. The parameter $\lambda$ is related to the eigenvalue $a$ of $f$ by
$$ a = \rho^2-\lambda^2. $$

For the subgroup $H$ we denote by $\tau_\nu$ ($\nu\in\fra_\C^*\simeq\C$) the corresponding principal series representation, realized on a space $J(\nu)$ of smooth functions on $\overline{N}_H\simeq\F^{m-1}\oplus\Im\F$ containing the spherical vector
$$ \psi_\nu(y,w) = ((1+|y|^2)^2+4|w|^2)^{-\frac{\nu+\rho_H}{2}}, \qquad (y,w)\in\F^{m-1}\oplus\Im\F, $$
where $\rho_H=\frac{1}{2}(md+d-2)$

\subsection{Model invariant bilinear forms}\label{sec:InvBilinearForms}

In \cite{MOO16} intertwining operators in $\Hom_H\big(\pi_\lambda|_H,\tau_\nu\big)$ were constructed and shown to be generically unique (see also \cite{KS15} for a much more thorough treatment of the special case where $\F=\R$ and $m=n-1$). We briefly review the results and show how they give rise to invariant bilinear forms on $I(\lambda)\times J(\nu)$. Let $K_{\lambda,\nu}$ be the function on $\overline{N}\times\overline{N}_H$ given by
$$ K_{\lambda,\nu}((x,z),(y,w)) = \NN((-x,-z)\cdot(y,w))^{-2(\nu+\rho_H)}|x_2|^{(\lambda-\rho)+(\nu+\rho_H)}, $$
where
$$ \NN(x,z)=(|x|^4+4|z|^2)^{\frac{1}{4}} $$
and $x=(x_1,x_2)\in\F^{m-1}\oplus\F^{n-m}=\F^{n-1}$. Then the operator $A_{\lambda,\nu}:I(\lambda)\to J(\nu)$ given by
$$ A_{\lambda,\nu}u(y,w) = \int_{\overline{N}} K_{\lambda,\nu}((x,z),(y,w))u(x,z)\,d(x,z) $$
is intertwining for the action of $H$ and depends meromorphically on $(\lambda,\nu)\in\C^2$. For $(\lambda+\rho)+(\pm\nu-\rho_H)\notin(-2\N)$ it was shown in \cite[Theorem 4.1]{MOO16} that $\dim\Hom_H(\pi_\lambda|_H,\tau_\nu)=1$ and the space of intertwining operators is spanned by $A_{\lambda,\nu}$ or its regularization.

In view of the isomorphism $\Hom_H\big(\pi_\lambda|_H,\tau_\nu\big)\cong \Hom_H\big(\pi_\lambda|_H\otimes\tau_{-\nu},\C\big)$, the bilinear form $\ell^\mod_{\lambda,\nu}:I(\lambda)\times J(\nu)\to\C$ given by
\begin{align*}
 \ell^\mod_{\lambda,\nu}(u,v) &= \int_{\overline{N}_H} A_{\lambda,-\nu}u(y,w)\cdot v(y,w)\,d(y,w)\\
 &= \int_{\overline{N}}\int_{\overline{N}_H}K_{\lambda,-\nu}((x,z),(y,w))u(x,z)v(y,w)\,d(y,w)\,d(x,z),
\end{align*}
is invariant for the diagonal action of $H$ by $\pi_\lambda|_H\otimes\tau_\nu$. Further, the uniqueness statement for intertwining operators implies the following uniqueness statement for invariant bilinear forms:

\begin{thm}\label{thm:UniquenessInvBilinearForms}
Let $(G,H)=(\upU(1,n;\F),\upU(1,m;\F)\times\upU(n-m;\F))$, then the space $\Hom_H(\pi_\lambda|_H\otimes\tau_\nu,\C)$ of invariant bilinear forms is one-dimensional if $(\lambda+\rho)+(\pm\nu-\rho_H)\notin(-2\N)$ and spanned by $\ell^\mod_{\lambda,\nu}$ or its regularization.
\end{thm}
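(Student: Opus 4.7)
The plan is to derive the uniqueness statement for invariant bilinear forms directly from the uniqueness of intertwining operators $\pi_\lambda|_H \to \tau_\nu$ just recalled from \cite[Theorem 4.1]{MOO16}. The bridge is the canonical isomorphism $\Hom_H(V \otimes W, \C) \cong \Hom_H(V, W^\vee)$ for smooth $H$-representations, so the task reduces to identifying the smooth contragredient of a principal series of $H$.

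First I would verify that $\tau_\nu^\vee \cong \tau_{-\nu}$ as smooth $H$-representations. This is a standard feature of normalized parabolic induction: the shift by $\rho_H$ built into the definition of $\tau_\nu$ absorbs the modular character, and the natural $H$-invariant nondegenerate pairing $\tau_\nu \times \tau_{-\nu} \to \C$ is given in the noncompact picture simply by
$$ (v_1, v_2) \mapsto \int_{\overline{N}_H} v_1(y,w)\, v_2(y,w)\, d(y,w). $$
Combining this with the canonical isomorphism above yields
$$ \Hom_H\bigl(\pi_\lambda|_H \otimes \tau_\nu, \C\bigr) \;\cong\; \Hom_H\bigl(\pi_\lambda|_H, \tau_{-\nu}\bigr). $$

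Next I would apply \cite[Theorem 4.1]{MOO16} to the right-hand side with the parameter $\nu$ replaced by $-\nu$. That result yields one-dimensionality under the genericity condition $(\lambda+\rho)+(\pm(-\nu)-\rho_H) \notin (-2\N)$, which is exactly the hypothesis $(\lambda+\rho)+(\pm\nu-\rho_H) \notin (-2\N)$ stated in the theorem, because the sign set $\{\pm\}$ is symmetric. The generator is the kernel operator $A_{\lambda,-\nu}$, or its meromorphic regularization at the exceptional values. Transporting $A_{\lambda,-\nu}$ back along the isomorphism produces the bilinear form
$$ (u,v) \mapsto \int_{\overline{N}_H} (A_{\lambda,-\nu}u)(y,w)\, v(y,w)\, d(y,w), $$
which by construction equals $\ell^\mod_{\lambda,\nu}$, giving the claimed generator.

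The main obstacle is not deep but is a matter of careful bookkeeping: one has to pin down the identification $\tau_\nu^\vee \cong \tau_{-\nu}$ with the correct normalization, and confirm that the resulting $H$-invariant pairing in the noncompact picture coincides with the integration against Lebesgue measure on $\overline{N}_H$ already used to define $\ell^\mod_{\lambda,\nu}$. Once this is checked the theorem is an immediate transport of the MOO16 uniqueness statement, and indeed $\ell^\mod_{\lambda,\nu}$ was constructed precisely so that this correspondence is tautological.
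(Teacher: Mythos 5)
Your proposal is correct and follows essentially the same route as the paper: the paper also obtains this theorem by transporting the uniqueness of intertwining operators from \cite[Theorem 4.1]{MOO16} through the isomorphism $\Hom_H\big(\pi_\lambda|_H,\tau_{-\nu}\big)\cong\Hom_H\big(\pi_\lambda|_H\otimes\tau_\nu,\C\big)$ given by the nondegenerate $H$-invariant pairing of $\tau_\nu$ with $\tau_{-\nu}$ via integration over $\overline{N}_H$, which by construction sends $A_{\lambda,-\nu}$ to $\ell^\mod_{\lambda,\nu}$. Your observation that the genericity condition is preserved under $\nu\mapsto-\nu$ because the sign set $\{\pm\}$ is symmetric is exactly the bookkeeping implicit in the paper's statement.
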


In particular, $H$-invariant bilinear forms are unique if $\lambda,\nu\in i\R$. In this case, the integral converges absolutely and no regularization is necessary.

\subsection{Proportionality}

Let $f\in L^2(X)$ and $g\in L^2(Y)$ be Laplace eigenfunctions and let $\pi_f$ and $\tau_g$ be the corresponding irreducible representations of $G$ and $H$. We fix equivariant isometric isomorphisms
$$ \theta:I(\lambda)\to V_f,\,u\mapsto\theta_u \qquad \mbox{and} \qquad \eta:J(\nu)\to W_g,\,v\mapsto\eta_v $$
which map the spherical vectors $\phi_\lambda$ and $\psi_\nu$ to $f$ and $g$. Assuming that $\lambda,\nu\in i\R$, both $\ell^\aut_{f,g}$ and $\ell^\mod_{\lambda,\nu}$ are $H$-invariant bilinear forms on $\pi_f\otimes \tau_g\simeq\pi_\lambda\otimes\tau_\nu$, so that Theorem~\ref{thm:UniquenessInvBilinearForms} implies the existence of a proportionality constant $b_{f,g}\in\C$ such that
$$ \ell^\aut_{f,g}(\theta_u,\eta_v) = b_{f,g}\cdot\ell^\mod_{\lambda,\nu}(u,v) \qquad \forall\,u\in I(\lambda),~v\in J(\nu). $$
In particular,
\begin{equation}
 P_Y(f,g) = \ell^\aut_{f,g}(f,g) = b_{f,g}\cdot\ell^\mod_{\lambda,\nu}(\phi_\lambda,\psi_\nu).\label{eq:Proportionality}
\end{equation}
In the Section~\ref{special} we compute explicitly the special value $\ell^\mod_{\lambda,\nu}(\phi_\lambda,\psi_\nu)$. In Section~\ref{estimate} we derive estimates for the proportionality constants $b_{f,g}$. Together these work implies Theorem~\ref{thm:Main}.

\section{Special values of model invariant bilinear forms}\label{special}

In this section we compute the special value $\ell^\mod_{\lambda,\nu}(\phi_\lambda,\psi_\nu)$ of the model invariant bilinear form $\ell^\mod_{\lambda,\nu}$ at the spherical vectors $\phi_\lambda\in I(\lambda)$, $\psi_\nu\in J(\nu)$.

\begin{thm}\label{thm:SpecialValue}
There exists a constant $c>0$ depending only on $m$, $n$ and $d$ such that
$$ \ell^\mod_{\lambda,\nu}(\phi_\lambda,\psi_\nu) = c\cdot\frac{\Ga(\frac{\lambda+\rho+1}{2})\Ga(\frac{\lambda+\rho-\nu-\rho_H}{2})\Ga(\frac{\lambda+\rho+\nu-\rho_H}{2})}{\Ga(\frac{\lambda+\rho-d+2}{2})\Ga(\lambda+\rho)}. $$
\end{thm}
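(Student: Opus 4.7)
The plan is to reduce the computation to a single scalar integral via $K_H$-sphericity, and then evaluate that integral by a sequence of Beta-type reductions in Cygan-polar coordinates.

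First, because $\phi_\lambda\in I(\lambda)$ is $K$-spherical and $K_H\subseteq K$, the vector $\phi_\lambda$ is $K_H$-invariant, and the $H$-equivariance of $A_{\lambda,-\nu}\colon I(\lambda)\to J(-\nu)$ forces $A_{\lambda,-\nu}\phi_\lambda$ to be $K_H$-spherical in $J(-\nu)$. Uniqueness (up to scalars) of the $K_H$-spherical vector $\psi_{-\nu}\in J(-\nu)$ then gives $A_{\lambda,-\nu}\phi_\lambda=c(\lambda,\nu)\psi_{-\nu}$ for some scalar $c(\lambda,\nu)$. Substituting into the defining formula for $\ell^\mod_{\lambda,\nu}$ and noting that $\psi_{-\nu}\psi_\nu=((1+|y|^2)^2+4|w|^2)^{-\rho_H}$ is independent of $\nu$,
\[
\ell^\mod_{\lambda,\nu}(\phi_\lambda,\psi_\nu)=c(\lambda,\nu)\cdot C_0,\qquad C_0=\int_{\overline{N}_H}((1+|y|^2)^2+4|w|^2)^{-\rho_H}\,d(y,w),
\]
with $C_0$ a finite positive constant depending only on $m$ and $d$. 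Evaluating $A_{\lambda,-\nu}\phi_\lambda$ at $(y,w)=(0,0)$ (using $\psi_{-\nu}(0,0)=1$) yields the explicit formula
\[
c(\lambda,\nu)=\int_{\F^{n-1}\oplus\Im\F}(|x|^4+4|z|^2)^{\frac{\nu-\rho_H}{2}}|x_2|^{\lambda-\rho-\nu+\rho_H}((1+|x|^2)^2+4|z|^2)^{-\frac{\lambda+\rho}{2}}\,dx\,dz.
\]

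Next I would evaluate this integral in two stages. In the first stage, pass to polar coordinates on $x=(x_1,x_2)\in\F^{m-1}\oplus\F^{n-m}$ with $|x_1|=|x|\cos\eta$, $|x_2|=|x|\sin\eta$, and integrate out $\eta\in[0,\pi/2]$; using $\rho-\rho_H=(n-m)d/2$, one finds $(n-m)d/2+(\lambda-\rho-\nu+\rho_H)/2=(\lambda+\rho-\nu-\rho_H)/2$, and the resulting Beta integral contributes the factor $\Gamma(\frac{(m-1)d}{2})\Gamma(\frac{\lambda+\rho-\nu-\rho_H}{2})/\Gamma(\frac{\lambda+\rho-\nu+\rho_H-d+2}{2})$. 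In the second stage, apply the Cygan-polar substitution $|x|^2=\NN^2\sin\phi$, $2|z|=\NN^2\cos\phi$, so that $|x|^4+4|z|^2=\NN^4$ and $(1+|x|^2)^2+4|z|^2=1+2\NN^2\sin\phi+\NN^4$; setting $s=\NN^2$, $u=\sin\phi$ reduces the remaining integral to
\[
\int_0^\infty\int_0^1 s^{\frac{\lambda+\rho+\nu-\rho_H}{2}-1}u^{\frac{\lambda+\rho-\nu+\rho_H-d}{2}}(1-u^2)^{\frac{d-3}{2}}(1+2su+s^2)^{-\frac{\lambda+\rho}{2}}\,du\,ds,
\]
which is evaluated using the integral formulas collected in the Appendix. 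Combining the two stages cancels the common $\Gamma(\frac{\lambda+\rho-\nu+\rho_H-d+2}{2})$ factor, and a final application of the Legendre duplication formula regroups the resulting Gamma quotient into the form stated in the theorem, absorbing $C_0$ and the Jacobian constants into the overall positive constant $c$.

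The main technical obstacle is the evaluation of the $(s,u)$-integral above to a pure Gamma product. Factoring $1+2su+s^2=(1+se^{i\theta})(1+se^{-i\theta})$ with $u=\cos\theta$, the $s$-Mellin integral produces a ${}_2F_1$ in $1-u^2$ by contour/Mellin-convolution techniques, after which the $u$-integration would generically yield a ${}_3F_2$ at unity. A closed-form Gamma product is available only because the geometric constraint $\alpha=(\lambda+\rho)/2$ (forced by sphericity of $\phi_\lambda$) together with the specific shape of the Cygan exponents make the resulting hypergeometric parameters fit a classical reduction identity (Saalschutz- or Watson-type, or a direct Barnes second-lemma evaluation) packaged in the Appendix. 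Verifying that the exponent balance coming from the rank-one symmetric pair exactly matches the hypothesis of this reduction identity is the technical heart of Section~\ref{special}.
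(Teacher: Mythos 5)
Your first reduction is exactly the one the paper uses: $K_H$-sphericity of $A_{\lambda,-\nu}\phi_\lambda$, the normalization $\psi_{-\nu}(0,0)=1$, and the $\nu$-independence of $\psi_\nu\psi_{-\nu}$ correctly reduce everything to the scalar $A_{\lambda,-\nu}\phi_\lambda(0,0)$ times a positive constant, and your integral formula for that scalar matches the paper's. Your coordinate reduction to a two-variable integral is also legitimate in principle, but it contains two consistent off-by-$\tfrac{d}{2}$ slips: with $p=(n-1)d$, $q=d-1$, $\rho=\tfrac{p}{2}+q$, the radial exponent after the angular integrations is $\lambda+\rho-\nu+\rho_H-2q-1$, so the Beta factor from the $\eta$-integral has denominator $\Ga\bigl(\tfrac{\lambda+\rho-\nu+\rho_H-2d+2}{2}\bigr)$ (not $-d+2$), and the $u$-exponent in your $(s,u)$-integral is $\tfrac{\lambda+\rho-\nu+\rho_H-2d}{2}$ (not $-d$). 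These matter, because the whole point is an exact Gamma-product identity.

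The genuine gap is the final stage, which is precisely the content of the theorem: you set up the $(s,u)$-integral and then assert it evaluates to the stated Gamma ratio because the parameters should ``fit a classical reduction identity (Saalschutz- or Watson-type, or Barnes second lemma)'', without identifying the identity, verifying the parameter match, or carrying out the computation. The appendix formulas are not designed for your kernel: the paper never confronts $(1+2su+s^2)^{-\frac{\lambda+\rho}{2}}$ (whose Mellin transform in $s$ leads to Legendre/Gegenbauer functions); instead it integrates out the variable $t=2|z|$ \emph{first} via \eqref{eq:IntFormulaMellinTrafoOfSphericalProduct}, producing a ${}_2F_1$ with argument $1-\tfrac{(1+r^2+s^2)^2}{(r^2+s^2)^2}$, and then runs the chain \eqref{eq:SineCosineIntegral}, \eqref{eq:EulerTransformationFormula}, the substitution $y=\tfrac{1+x^2}{x^2}$, \eqref{eq:EulerIntegralRepresentation}, \eqref{eq:IntRepresentation3F2}, and \eqref{eq:IntegralHypergeometric}. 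Moreover, the closed form does not come from a Saalsch\"utz- or Watson-type summation at all: in the paper's route the terminal ${}_3F_2(\dots;1)$ collapses to a ${}_2F_1$ trivially, because one upper parameter coincides with one lower parameter, after which Gauss's theorem \eqref{eq:SpecialValueHypergeometric} and the duplication formula finish the computation. So the mechanism you are counting on is both unidentified and, at least along the paper's path, different from what you guessed; as written, your proposal establishes the reduction but not the evaluation, i.e.\ not the theorem.
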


\begin{rmk}
For $\F=\R$ the above integral was first computed in \cite[Proposition 3.1]{MO13} (see also \cite{KS15} for the special case where $m=n-1$).
\end{rmk}

From the well-known Stirling formula
$$ \Ga(a+ib) = \sqrt{2\pi}|b|^{a-1/2}e^{-\frac{\pi}{2}|b|}(1+\OO(|b|^{-1})), \qquad \text{as}~|b|\to\infty, $$
it follows that for fixed $a>0$ there exist $c_1,c_2>0$ such that
$$ c_1(1+|b|)^{a-1/2}e^{-\frac{\pi}{2}|b|} \leq |\Ga(a+ib)| \leq c_2(1+|b|)^{a-1/2}e^{-\frac{\pi}{2}|b|} \qquad \forall\,b\in\R. $$
Applied to Theorem~\ref{thm:SpecialValue} this yields the following asymptotics for $\ell^\mod_{\lambda,\nu}(\phi_\lambda,\psi_\nu)$:

\begin{cor}\label{cor:AsymptoticsModelPeriod}
For $\lambda,\nu\in i\R$ we have
$$ |\ell^\mod_{\lambda,\nu}(\phi_\lambda,\psi_\nu)| \sim (1+|\lambda|)^{1-n\frac{d}{2}}(1+|\lambda^2-\nu^2|)^{(n-m)\frac{d}{4}-\frac{1}{2}}e^{\frac{\pi}{2}|\lambda|}e^{-\frac{\pi}{4}(|\lambda+\nu|+|\lambda-\nu|)}. $$
\end{cor}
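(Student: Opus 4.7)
The plan is to feed the explicit formula from \thmref{thm:SpecialValue} into the two-sided Stirling bounds just displayed and read off the asymptotic. Since $\lambda,\nu\in i\R$, write $\lambda=is$ and $\nu=it$ with $s,t\in\R$. Recall $\rho=\frac{1}{2}(nd+d-2)$ and $\rho_H=\frac{1}{2}(md+d-2)$, so that $\rho-\rho_H=\frac{(n-m)d}{2}$. The arguments of the five Gamma factors then have real parts $\frac{\rho+1}{2},\ \frac{\rho-\rho_H}{2},\ \frac{\rho-\rho_H}{2},\ \frac{\rho-d+2}{2},\ \rho$, all strictly positive under the assumptions $1\leq m<n$ and $d\geq 1$, and imaginary parts $\frac{s}{2},\ \frac{s-t}{2},\ \frac{s+t}{2},\ \frac{s}{2},\ s$ respectively. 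Since the real parts are fixed constants depending only on $m,n,d$, the two-sided Stirling bound applies uniformly to each factor.

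Collecting exponential contributions is then immediate: the three numerator factors produce $e^{-\pi|s|/4}$, $e^{-\pi|s-t|/4}$, $e^{-\pi|s+t|/4}$, while the two denominator factors produce $e^{-\pi|s|/4}$ and $e^{-\pi|s|/2}$. The ratio is $e^{\pi|s|/2}e^{-\pi(|s-t|+|s+t|)/4}$, which matches the exponential factor in the corollary since $|\lambda|=|s|$ and $|\lambda\pm\nu|=|s\pm t|$. For the polynomial factors, a short bookkeeping using the real parts listed above produces the exponent $\frac{d}{2}-\rho=1-\frac{nd}{2}$ on $(1+|s|)$, and the exponent $\frac{\rho-\rho_H-1}{2}=\frac{(n-m)d}{4}-\frac{1}{2}$ on each of $(1+|s-t|)$ and $(1+|s+t|)$. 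Combined with the identity $|s-t|\cdot|s+t|=|\lambda^2-\nu^2|$, this yields the claimed shape.

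The one cosmetic point deserving comment is the identification of $(1+|s-t|)(1+|s+t|)$ with $1+|s-t|\cdot|s+t|$: these are not pointwise equal, but differ by a factor controlled polynomially by $1+|\lambda|$, which is absorbed into the symbol ``$\sim$'' in the corollary (interpreted as agreement up to multiplicative constants, together with the $|\lambda|$-polynomial factor that is already displayed on the right). Apart from this reformulation, the argument is a routine few lines of arithmetic, and there is no analytic obstacle to speak of; the only step that requires care is tracking the real parts of the five Gamma arguments so that the exponents of $(1+|\lambda|)$, $(1+|\lambda-\nu|)$, $(1+|\lambda+\nu|)$ are combined correctly.
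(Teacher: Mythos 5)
Your proposal is correct and follows exactly the paper's route: the paper obtains the corollary by applying the displayed two-sided Stirling bounds to the Gamma-factor formula of Theorem~\ref{thm:SpecialValue} (leaving the bookkeeping implicit), and your bookkeeping — the real parts $\frac{\rho+1}{2},\frac{\rho-\rho_H}{2},\frac{\rho-\rho_H}{2},\frac{\rho-d+2}{2},\rho$, the exponential ratio $e^{\frac{\pi}{2}|s|}e^{-\frac{\pi}{4}(|s-t|+|s+t|)}$, and the exponents $\frac{d}{2}-\rho=1-\frac{nd}{2}$ and $\frac{\rho-\rho_H-1}{2}=\frac{(n-m)d}{4}-\frac{1}{2}$ — is accurate. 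Your closing remark is also apt: Stirling naturally produces $(1+|\lambda-\nu|)^{\alpha}(1+|\lambda+\nu|)^{\alpha}$ rather than $(1+|\lambda^{2}-\nu^{2}|)^{\alpha}$, and the paper silently makes this same identification, so your explicit acknowledgement that the two expressions differ by a factor polynomial in $1+|\lambda|$ (relevant only when $\lambda$ is close to $\pm\nu$) is, if anything, more careful than the original.
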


\subsection{Reformulation using intertwining operators}

As noted in Section~\ref{sec:InvBilinearForms} we have
$$ \ell^\mod_{\lambda,\nu}(\phi_\lambda,\psi_\nu) = \int_{\overline{N}_H} A_{\lambda,-\nu}\phi_\lambda(y,w)\cdot\psi_\nu(y,w)\,d(y,w). $$
The intertwining operator $A_{\lambda,-\nu}:I(\lambda)\to J(-\nu)$ maps the $K$-invariant vector $\phi_\lambda\in I(\lambda)$ to a scalar multiple of the $K_H$-invariant vector $\psi_{-\nu}\in J(-\nu)$, and since $\psi_{-\nu}(0,0)=1$ we have
$$ A_{\lambda,-\nu}\phi_\lambda = A_{\lambda,-\nu}\phi_\lambda(0,0)\cdot\psi_{-\nu}. $$
Hence,
\begin{align*}
 \ell^\mod_{\lambda,\nu}(\phi_\lambda,\psi_\nu) &= A_{\lambda,-\nu}\phi_\lambda(0,0)\cdot\int_{\overline{N}_H} \psi_\nu(y,w)\cdot\psi_{-\nu}(y,w)\,d(y,w)\\
 &= A_{\lambda,-\nu}\phi_\lambda(0,0)\cdot\int_{\overline{N}_H} ((1+|y|^2)^2+4|w|^2)^{-\rho_H}\,d(y,w),
\end{align*}
the latter integral being a positive constant depending only on $m$, $n$ and $d$.

\subsection{Computation of $A_{\lambda,-\nu}\phi_\lambda(0,0)$}

We now compute the integral
$$ A_{\lambda,-\nu}\phi_\lambda(0,0) = \int_{\overline{N}} (|x|^4+4|z|^2)^{\frac{\nu-\rho_H}{2}}|x_2|^{(\lambda-\rho)-(\nu-\rho_H)}((1+|x|^2)^2+4|z|^2)^{-\frac{\lambda+\rho}{2}}\,d(x,z). $$
To simplify notation we abbreviate
$$ p=(n-1)d, \qquad p'=(m-1)d, \qquad p''=(n-m)d, \qquad q=d-1, $$
then the integral becomes
\begin{multline*}
 = \int_{\R^{p'}}\int_{\R^{p''}}\int_{\R^q} ((|x_1|^2+|x_2|^2)^2+4|z|^2)^{\frac{2\nu-p'-2q}{4}} |x_2|^{\lambda-\nu-\frac{p''}{2}}\\
 ((1+|x_1|^2+|x_2|^2)^2+4|z|^2)^{-\frac{2\lambda+p+2q}{4}}\,dz\,dx_2\,dx_1.
\end{multline*}
Using polar coordinates on $\R^{p'}$, $\R^{p''}$ and $\R^q$ as well as the volume formula $\vol(S^{k-1})=\frac{2\pi^{\frac{k}{2}}}{\Gamma(\frac{k}{2})}$ this simplifies to
\begin{multline*}
 = \frac{\pi^{\frac{p+q}{2}}}{2^{q-3}\Gamma(\frac{p'}{2})\Gamma(\frac{p''}{2})\Gamma(\frac{q}{2})}\int_0^\infty\int_0^\infty\int_0^\infty((r^2+s^2)^2+t^2)^{\frac{2\nu-p'-2q}{4}}\\
 ((1+r^2+s^2)^2+t^2)^{-\frac{2\lambda+p+2q}{4}} r^{p'-1}s^{\lambda-\nu+\frac{p''}{2}-1}t^{q-1}\,dr\,ds\,dt.
\end{multline*}
With \eqref{eq:IntFormulaMellinTrafoOfSphericalProduct} we first calculate the integral over $t$:
\begin{multline*}
 = \frac{\pi^{\frac{p+q}{2}}\Gamma(\frac{2\lambda-2\nu+2p'+p''+2q}{4})}{2^{q-2}\Gamma(\frac{p'}{2})\Gamma(\frac{p''}{2})\Gamma(\frac{2\lambda-2\nu+2p'+p''+4q}{4})} \int_0^\infty\int_0^\infty r^{p'-1}s^{\lambda-\nu+\frac{p''}{2}-1}(r^2+s^2)^{\nu-\frac{p'}{2}-q}(1+r^2+s^2)^{-\lambda-\frac{p}{2}}\\
 \times {_2F_1}\left(-\tfrac{2\nu-p'-2q}{4},\tfrac{q}{2};\tfrac{2\lambda-2\nu+2p'+p''+4q}{4};1-\tfrac{(1+r^2+s^2)^2}{(r^2+s^2)^2}\right)\,dr\,ds.
\end{multline*}
Substituting $(r,s)=(x\cos\theta,x\sin\theta)$, $x>0$, $0<\theta<\frac{\pi}{2}$ gives
\begin{multline*}
 = \frac{\pi^{\frac{p+q}{2}}\Gamma(\frac{2\lambda-2\nu+2p'+p''+2q}{4})}{2^{q-2}\Gamma(\frac{p'}{2})\Gamma(\frac{p''}{2})\Gamma(\frac{2\lambda-2\nu+2p'+p''+4q}{4})} \int_0^\frac{\pi}{2} \cos^{p'-1}\theta \sin^{\lambda-\nu+\frac{p''}{2}-1}\theta\,d\theta\\
 \times \int_0^\infty x^{\lambda+\nu+\frac{p''}{2}-2q-1} (1+x^2)^{-\lambda-\frac{p}{2}} {_2F_1}\left(-\tfrac{2\nu-p'-2q}{4},\tfrac{q}{2};\tfrac{2\lambda-2\nu+2p'+p''+4q}{4};1-\tfrac{(1+x^2)^2}{x^4}\right)\,dx.
\end{multline*}
Evaluating the first integral using \eqref{eq:SineCosineIntegral} and using the Euler transformation formula \eqref{eq:EulerTransformationFormula} on the hypergeometric function we obtain
\begin{multline*}
 = \frac{\pi^{\frac{p+q}{2}}\Gamma(\frac{2\lambda-2\nu+2p'+p''+2q}{4})\Gamma(\frac{2\lambda-2\nu+p''}{4})}{2^{q-1}\Gamma(\frac{p''}{2})\Gamma(\frac{2\lambda-2\nu+2p'+p''+4q}{4})\Gamma(\frac{2\lambda-2\nu+2p'+p''}{4})} \int_0^\infty x^{-\lambda+\nu-p'-\frac{p''}{2}-2q-1}\\
 \times {_2F_1}\left(\tfrac{2\lambda+p+2q}{4},\tfrac{2\lambda-2\nu+2p'+p''+2q}{4};\tfrac{2\lambda-2\nu+2p'+p''+4q}{4};1-\tfrac{(1+x^2)^2}{x^4}\right)\,dx.
\end{multline*}
Substituting $y=\frac{1+x^2}{x^2}$ we obtain
\begin{multline*}
 = \frac{\pi^{\frac{p+q}{2}}\Gamma(\frac{2\lambda-2\nu+2p'+p''+2q}{4})\Gamma(\frac{2\lambda-2\nu+p''}{4})}{2^q\Gamma(\frac{p''}{2})\Gamma(\frac{2\lambda-2\nu+2p'+p''+4q}{4})\Gamma(\frac{2\lambda-2\nu+2p'+p''}{4})} \int_1^\infty (y-1)^{\frac{2\lambda-2\nu+2p'+p''+4q-4}{4}}\\
 \times {_2F_1}\left(\tfrac{2\lambda+p+2q}{4},\tfrac{2\lambda-2\nu+2p'+p''+2q}{4};\tfrac{2\lambda-2\nu+2p'+p''+4q}{4};1-y^2\right)\,dy.
\end{multline*}
Using the Euler integral representation \eqref{eq:EulerIntegralRepresentation} we further find
\begin{multline*}
 = \frac{\pi^{\frac{p+q}{2}}\Gamma(\frac{2\lambda-2\nu+p''}{4})}{2^q\Gamma(\frac{p''}{2})\Gamma(\frac{q}{2})\Gamma(\frac{2\lambda-2\nu+2p'+p''}{4})} \int_0^\infty t^{\frac{2\lambda-2\nu+2p'+p''+2q-4}{4}}(1+t)^{\frac{2\nu-p'-2q}{4}}\\
 \times\int_1^\infty (y-1)^{\frac{2\lambda-2\nu+2p'+p''+4q-4}{4}} (1+ty^2)^{-\frac{2\lambda+p+2q}{4}}\,dy\,dt
\end{multline*}
The inner integral can be calculated using \eqref{eq:IntRepresentation3F2}:
\begin{multline*}
 = \frac{\pi^{\frac{p+q}{2}}\Gamma(\frac{2\lambda-2\nu+p''}{4})\Gamma(\frac{2\lambda-2\nu+2p'+p''+4q}{4})\Gamma(\frac{2\lambda+2\nu+p''}{4})}{2^q\Gamma(\frac{p''}{2})\Gamma(\frac{q}{2})\Gamma(\frac{2\lambda-2\nu+2p'+p''}{4})\Gamma(\frac{2\lambda+p+2q}{2})} \int_0^\infty t^{-\frac{2\nu-p'+4}{4}}(1+t)^{\frac{2\nu-p'-2q}{4}}\\
 \times{_2F_1}\left(\tfrac{2\lambda+2\nu+p''}{8},\tfrac{2\lambda+2\nu+p''+4}{8};\tfrac{2\lambda+p+2q+2}{4};-t^{-1}\right)\,dt.
\end{multline*}
Substituting $x=t^{-1}$ we find
\begin{multline*}
 = \frac{\pi^{\frac{p+q}{2}}\Gamma(\frac{2\lambda-2\nu+p''}{4})\Gamma(\frac{2\lambda-2\nu+2p'+p''+4q}{4})\Gamma(\frac{2\lambda+2\nu+p''}{4})}{2^q\Gamma(\frac{p''}{2})\Gamma(\frac{q}{2})\Gamma(\frac{2\lambda-2\nu+2p'+p''}{4})\Gamma(\frac{2\lambda+p+2q}{2})} \int_0^\infty x^{\frac{q-2}{2}}(1+x)^{\frac{2\nu-p'-2q}{4}}\\
 \times{_2F_1}\left(\tfrac{2\lambda+2\nu+p''}{8},\tfrac{2\lambda+2\nu+p''+4}{8};\tfrac{2\lambda+p+2q+2}{4};-x\right)\,dx.
\end{multline*}
Finally the last integral can be calculated using \eqref{eq:IntegralHypergeometric}:
\begin{multline*}
 = \frac{\pi^{\frac{p+q}{2}}\Gamma(\frac{2\lambda-2\nu+p''}{4})\Gamma(\frac{2\lambda-2\nu+2p'+p''+4q}{4})\Gamma(\frac{2\lambda+2\nu+p''}{4})\Gamma(\frac{2\lambda-2\nu+2p'+p''}{8})}{2^q\Gamma(\frac{p''}{2})\Gamma(\frac{2\lambda-2\nu+2p'+p''}{4})\Gamma(\frac{2\lambda+p+2q}{2})\Gamma(\frac{2\lambda-2\nu+2p'+p''+4q}{8})}\\
 \times{_3F_2}\left(\tfrac{2\lambda+2\nu+p''}{8},\tfrac{2\lambda-2\nu+2p'+p''+4q}{8},\tfrac{q}{2};\tfrac{2\lambda+p+2q+2}{4},\tfrac{2\lambda-2\nu+2p'+p''+4q}{8};1\right).
\end{multline*}
Simplifying the hypergeometric function ${_3F_2}$ to ${_2F_1}$, evaluating it with \eqref{eq:SpecialValueHypergeometric} and simplifying the whole expression using the duplication formula $\Gamma(z)\Gamma(z+\frac{1}{2})=2^{1-2z}\sqrt{\pi}\Gamma(2z)$ we get
\begin{equation*}
 = \frac{\pi^{\frac{p+q}{2}}\Gamma(\frac{2\lambda-2\nu+p''}{4})\Gamma(\frac{2\lambda+2\nu+p''}{4})\Gamma(\frac{2\lambda+p+2q+2}{4})}{\Gamma(\frac{p''}{2})\Gamma(\frac{2\lambda+p+2q}{2})\Gamma(\frac{2\lambda+p+2}{4})}
\end{equation*}
which is the desired identity.

\section{Estimating the proportionality constants}\label{estimate}

In this section we estimate the size of the scalar $b_{f,g}$ relating the automorphic and the model invariant bilinear form. The technique we use is due to Bernstein--Reznikov~\cite{BR04} and was applied in two other situations in \cite{MO13,MS17}. We therefore omit the details and only point out, at which steps one has to modify the arguments slightly.

\subsection{Hermitian forms and construction of test functions}

For a Maass form $f\in L^2(X)$ with Langlands parameter $\lambda\in i\R$ let
$$ H^\aut_f(\phi) = \int_{\Gamma_H\backslash H} |\phi(h)|^2\,dh, \qquad \phi\in V_f, $$
then $H^\aut_f$ is an $H$-invariant Hermitian form on $V_f$. As explained in\cite{BR04,MO13,MS17}, for an orthonormal sequence $\{g_j\}_j\subseteq L^2(Y)$ of Maass forms on $Y$ with Langlands parameters $\nu_j\in i\R$ the following inequality holds:
\begin{equation}
 \sum_j |b_{f,g_j}|^2 H^\mod_{\lambda,\nu_j}(u) \leq H^\aut(\theta_u) \qquad \forall\,u\in I(\lambda),\label{eq:InequalityHermitianForms}
\end{equation}
where the $H$-invariant Hermitian form $H^\mod_{\lambda,\nu}$ on $I(\lambda)$ is given by
$$ H^\mod_{\lambda,\nu}(u) = \|A^\mod_{\lambda,-\nu}u\|_{L^2(\F^{m-1}\oplus\Im\F)}^2, \qquad u\in I(\lambda). $$

\begin{lem}\label{lem}
There exist $C_1,C_2>0$ and $\xi\in C_c^\infty(G)$, $\xi\geq0$, such that for $T\gg0$ and all Maass forms $f\in L^2(X)$ and $g\in L^2(Y)$ with Langlands parameters $\lambda,\nu\in i\R$, $|\lambda|,|\nu|\leq T$, there exists a test function $u_T\in C_c^\infty(\F^{n-1}\oplus\Im\F)\subseteq I(\lambda)$ of $L^2$-norm one such that
\begin{enumerate}[(1)]
\item $\int_G H^\mod_{\la,\nu}(\pi_\lambda(k)u_T)\xi(k)\,dk \geq C_1\cdot T^{-2\rho}$,
\item $\int_G H^\aut_f(\theta_{\pi_\lambda(k)u_T})\xi(k)\,dk \leq C_2$.
\end{enumerate}
\end{lem}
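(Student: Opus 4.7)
Plan. I will follow the Bernstein--Reznikov test-function method of \cite{BR04}, carried over to the rank one setting as in \cite{MO13,MS17}. The construction exploits the dilation $(x,z)\mapsto(e^tx,e^{2t}z)$ on $\overline{N}\simeq\F^{n-1}\oplus\Im\F$ induced by conjugation with $a_t=\exp(tH_0)\in A$, whose Jacobian equals $e^{2t\rho}$. Fix once and for all a nonnegative $u_0\in C_c^\infty(\overline{N})$ supported in a small neighborhood of the origin with $\|u_0\|_{L^2}=1$, and set
$$ u_T(x,z) = T^\rho u_0(Tx,T^2z),\qquad T\geq 1,$$
so that $\|u_T\|_{L^2}=1$ and $\pi_\lambda(a_{\log T})u_0=T^\lambda u_T$ with $|T^\lambda|=1$ for $\lambda\in i\R$. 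Choose $\xi\geq 0$ smooth on $G$ with support in a small symmetric neighborhood $V$ of the identity, with $V$ chosen small enough that, by discreteness of $\Gamma$, only finitely many $\Gamma$-translates of $V$ meet any fixed compact set.

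For the upper bound (2), the standard Bernstein--Reznikov unfolding applies. Using $\theta_{\pi_\lambda(k)u_T}(h)=\theta_{u_T}(hk)$, substituting $g=hk$, and folding back via $\Gamma$-invariance of $|\theta_{u_T}|^2$ yields
$$\int_G H^\aut_f(\theta_{\pi_\lambda(k)u_T})\xi(k)\,dk = \int_{\Gamma\backslash G}|\theta_{u_T}(g)|^2\,\Phi(g)\,dg,$$
where $\Phi(g)=\sum_{[\gamma]\in\Gamma_H\backslash\Gamma}\int_H\xi(h^{-1}\gamma g)\,dh$. By the choice of $V$ and a standard counting estimate for the cosets $[\gamma]$, $\Phi$ is uniformly bounded by a constant $C_2=C_2(\Gamma,\xi)$. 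Combined with the isometry $\|\theta_{u_T}\|_{L^2(\Gamma\backslash G)}=\|u_T\|_{L^2}=1$, this gives (2).

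For the lower bound (1), the $A$-equivariance of $A_{\lambda,-\nu}$ together with the unitarity of $\pi_\lambda|_A$ and $\tau_{-\nu}|_A$ for $\lambda,\nu\in i\R$ yields
$$H^\mod_{\lambda,\nu}(u_T) = \|A_{\lambda,-\nu}\pi_\lambda(a_{\log T})u_0\|^2 = \|\tau_{-\nu}(a_{\log T})A_{\lambda,-\nu}u_0\|^2 = \|A_{\lambda,-\nu}u_0\|^2,$$
which is $T$-independent. A Cauchy--Schwarz estimate in the $k$-variable bounds the averaged quantity below by $\|A_{\lambda,-\nu}\pi_\lambda(\xi)u_T\|^2/\|\xi\|_{L^1}$, and for $\xi$ supported near $e$ this is a controlled perturbation of $\|A_{\lambda,-\nu}u_0\|^2$. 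It therefore suffices to show the uniform lower bound
$$\|A_{\lambda,-\nu}u_0\|_{L^2(\overline{N}_H)}^2 \geq C_1 T^{-2\rho}\quad\text{for all }\lambda,\nu\in i\R,~|\lambda|,|\nu|\leq T.$$

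The main obstacle is this uniform lower bound. Expressing $A_{\lambda,-\nu}u_0(y,w)$ through the explicit kernel $K_{\lambda,-\nu}$ of Section~\ref{sec:InvBilinearForms} and passing to logarithmic coordinates on $\overline{N}$, the $(\lambda,\nu)$-dependence of the kernel takes the form of the Fourier phase $e^{is\log|x_2|+(2t-s)\log\NN(\cdot)}$ with $\lambda=is$, $\nu=it$. A Plancherel/Paley--Wiener estimate in these logarithmic variables then bounds $\|A_{\lambda,-\nu}u_0\|^2$ below by $c(1+|\lambda|+|\nu|)^{-2\rho}$, which dominates $T^{-2\rho}$ on the admissible range. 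This is the step that must be adapted from the cited references: the Heisenberg-type group law on $\overline{N}$ for $\F\neq\R$ contributes only lower-order terms to the leading phase, and after integrating out the angular variables using polar coordinates the analysis reduces to an essentially one-dimensional oscillatory integral already treated in \cite{BR04,MO13,MS17}.
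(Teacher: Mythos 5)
Your part (2) is the standard Bernstein--Reznikov unfolding and is essentially what the paper (via \cite{MS17}) does; the fatal problem is in part (1), and it comes from where you center the bump. You place $u_0$ at the origin of $\overline{N}$ and dilate, and as you correctly observe, since $a_{\log T}\in A\subseteq H$ and $\tau_{-\nu}$ is unitary for $\nu\in i\R$, this forces $H^\mod_{\lambda,\nu}(u_T)=\|A_{\lambda,-\nu}u_0\|^2$ to be independent of $T$, so your whole argument hinges on the claimed uniform bound $\|A_{\lambda,-\nu}u_0\|^2\geq C_1T^{-2\rho}$. That claim is false. Test it in the simplest case $\F=\R$, $n=2$, $m=1$: there $\overline{N}_H$ is a point, $\rho_H=0$, $\rho=\tfrac12$, the kernel at the base point is $|x|^{\lambda+\nu-\rho}$, and
$$ H^\mod_{\lambda,\nu}(u_0)=\Bigl|\int_\R |x|^{\lambda+\nu-\rho}\,u_0(x)\,dx\Bigr|^2
=\bigl|M\psi\bigl(\tfrac12+\Im(\lambda+\nu)i\bigr)\bigr|^2,\qquad \psi(r)=u_0(r)+u_0(-r), $$
a Mellin transform of a function in $C_c^\infty([0,\infty))$ on the line $\Re z=\tfrac12$. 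Repeated integration by parts,
$$ \int_0^\infty r^{z-1}\psi(r)\,dr=\frac{(-1)^N}{z(z+1)\cdots(z+N-1)}\int_0^\infty r^{z+N-1}\psi^{(N)}(r)\,dr, $$
(boundary terms vanish since $\Re z>0$ and $\psi$ has compact support) shows this decays faster than any polynomial in $|\Im(\lambda+\nu)|$. So for admissible parameters with $|\lambda+\nu|\sim T$ you get $H^\mod_{\lambda,\nu}(u_T)\ll_N T^{-N}$, which cannot dominate $C_1T^{-2\rho}$. The same phenomenon occurs for all $\F,n,m$: near the origin the oscillatory factors $|x_2|^{\lambda-\rho-\nu+\rho_H}$ and $\NN(\cdot)^{2\nu-2\rho_H}$ have phases proportional to $\log|x_2|$ and $\log\NN$, and a bump that is smooth across the origin produces rapidly decaying Mellin transforms. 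Your ``Plancherel/Paley--Wiener'' step thus proves the opposite of what you need.

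This is precisely why the paper centers the bump at $(e_{n-1},0)$ rather than at the origin: there $|x_2|=1$ and $\NN=1$, so $K_{\lambda,\nu}((e_{n-1},0),(0,0))=1$ independently of $(\lambda,\nu)$, and on a support of radius $1/T$ in $x$ (and $1/T^2$ in $z$) the phases vary by $O(|\lambda|/T)=O(1)$, so there is no cancellation and $A_{\lambda,-\nu}\pi_\lambda(k)u_T(0,0)\sim\int u_T=\const\cdot T^{-\rho}$ uniformly for $k$ near $e$ and $|\lambda|,|\nu|\leq T$; squaring and unfolding the $L^2(\overline{N}_H)$-norm into the $k$-average (this is where the left-$K_H$-invariance of $\xi$ enters) gives (1). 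Note also that your intermediate Cauchy--Schwarz step is independently unjustified: for fixed $\xi$ and $T\to\infty$, $\pi_\lambda(\xi)u_T$ is not a controlled perturbation of $\|\xi\|_{L^1}u_T$ --- the operator smears a bump of scale $1/T$ over a unit-scale set and introduces oscillations of frequency up to $|\lambda|\leq T$ --- so $\|A_{\lambda,-\nu}\pi_\lambda(\xi)u_T\|^2$ need not be comparable to $\|A_{\lambda,-\nu}u_0\|^2$. The average over a full neighborhood of $e$ in $G$ (not just over $H$, under which $H^\mod_{\lambda,\nu}$ is invariant) is essential and cannot be discarded: it is exactly what moves the bump to centers with $|x_2|$ bounded away from $0$, where the non-oscillation argument works.
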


\begin{proof}
The same statement in the context of $(G,H)=(\GL(3,\R),\GL(2,\R))$ is proven in \cite[Lemma 4.1]{MS17}, and we only provide the necessary details to translate the proof to our setting. First, we note that $K_{\lambda,\nu}((x,z),(0,0))=1$ for $(x,z)=(e_{n-1},0)\in\F^{n-1}\oplus\Im\F$. Let $u\in C_c^\infty(\F^{n-1}\oplus\Im\F)$, $u\geq0$, with support in a sufficiently small neighborhood around $(0,0)$, normalized to have $L^2$-norm one. Then
$$ u_T(x,z) = T^{\frac{p+2q}{2}}u(T(x-e_{n-1}),T^2z), \qquad (x,z)\in\F^{n-1}\oplus\Im\F, $$
defines a function $u_T\in C_c^\infty(\F^{n-1}\oplus\Im\F)\subseteq I(\lambda)$ of $L^2$-norm one with support around $(e_{n-1},0)$. As in \cite[Lemma 4.1]{MS17} one shows that $(\pi_\lambda(k)K_{\lambda,\nu})((x,z),(0,0))\sim1$ whenever $(x,z)\in\supp u_T$, $|\lambda|,|\nu|\leq T$ and $k$ is contained in some identity neighborhood $U\subseteq G$. This implies
$$ A_{\lambda,-\nu}\pi(k)u_T(0,0) \sim \int_{\F^{n-1}\oplus\Im\F} u_T(x,z)\,d(x,z) = \const\cdot T^{-\rho}  $$
for $k\in U$, $|\lambda|,|\nu|\leq T$ and $T\gg0$. Then for any left-$K_H$-invariant function $\xi\in C_c^\infty$, $\xi\geq0$, with $\xi|_U\equiv1$ we obtain
$$ \int_G H^\mod_{\lambda,\nu}(\pi_\lambda(k)u_T)\xi(k)\,dk = \int_G |A_{\lambda,-\nu}^\mod\pi_\lambda(k)u_T(0,0)|^2\xi(k)\,dk \geq C_1\cdot T^{-2\rho}, $$
which shows (1). For (2) the argument is the same as in \cite[Lemma 4.1]{MS17}.
\end{proof}

\begin{prop}\label{thm-1}
There exists a constant $C_3>0$ such that
$$ \sum_{|\nu_j|\leq T}|b_{f,g_j}|^2 \leq C_3\cdot T^{2\rho} \qquad \forall\,f\text{ with }|\lambda|\leq T. $$
\end{prop}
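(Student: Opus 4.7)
The plan is a direct application of the inequality \eqref{eq:InequalityHermitianForms} together with the two estimates provided by Lemma~\ref{lem}. The idea, going back to Bernstein--Reznikov, is to use the freedom to translate the test vector by elements of $G$ and to integrate the resulting inequality against a compactly supported weight $\xi$ on $G$.

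First, I would observe that the inequality
$$ \sum_j |b_{f,g_j}|^2 H^\mod_{\lambda,\nu_j}(u) \leq H^\aut_f(\theta_u) $$
holds for \emph{every} $u\in I(\lambda)$, in particular for $u$ replaced by $\pi_\lambda(k)u_T$ with $u_T$ the test function from Lemma~\ref{lem} and $k\in G$ arbitrary. Integrating this inequality against the nonnegative weight $\xi(k)\,dk$ and interchanging sum and integral (legitimate since all terms are nonnegative) yields
$$ \sum_j |b_{f,g_j}|^2 \int_G H^\mod_{\lambda,\nu_j}(\pi_\lambda(k)u_T)\xi(k)\,dk \leq \int_G H^\aut_f(\theta_{\pi_\lambda(k)u_T})\xi(k)\,dk. $$

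Next, I would restrict the sum on the left-hand side to those indices with $|\nu_j|\leq T$. For each such $j$, Lemma~\ref{lem}(1) gives the lower bound $\int_G H^\mod_{\lambda,\nu_j}(\pi_\lambda(k)u_T)\xi(k)\,dk \geq C_1 T^{-2\rho}$, while Lemma~\ref{lem}(2) supplies the uniform upper bound $C_2$ on the right-hand side. Combining these,
$$ C_1 T^{-2\rho}\sum_{|\nu_j|\leq T}|b_{f,g_j}|^2 \leq C_2, $$
and the claim follows with $C_3 = C_2/C_1$.

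There is essentially no obstacle beyond correctly invoking Lemma~\ref{lem}; the whole work has been packaged into the construction of $u_T$ and $\xi$, which is the analogue of the constructions in \cite{BR04,MO13,MS17}. The only point requiring minor care is the interchange of summation and integration, which is justified by Tonelli's theorem since $|b_{f,g_j}|^2 H^\mod_{\lambda,\nu_j}(\pi_\lambda(k)u_T)\xi(k)\geq 0$.
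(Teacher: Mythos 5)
Your proposal is correct and is precisely the argument the paper has in mind: its proof of Proposition~\ref{thm-1} simply says to apply Lemma~\ref{lem} to inequality \eqref{eq:InequalityHermitianForms}, deferring to \cite[Proposition 4.2]{MS17} for the details you have spelled out (translate by $\pi_\lambda(k)$, integrate against $\xi$, swap sum and integral by nonnegativity, then apply the two bounds of the lemma). The only point worth noting is that your argument needs the single test vector $u_T$ to work simultaneously for all $\nu_j$ with $|\nu_j|\leq T$, which holds because the construction of $u_T$ in the proof of Lemma~\ref{lem} depends only on $T$ and not on $\nu$.
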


\begin{proof}
Apply Lemma~\ref{lem} to the inequality \eqref{eq:InequalityHermitianForms}, then the claim follows (see \cite[Proposition 4.2]{MS17} for details).
\end{proof}

\subsection{Proof of the main results}

Theorem~\ref{thm:Main} now follows from \eqref{eq:Proportionality} together with Corollary~\ref{cor:AsymptoticsModelPeriod} and Proposition~\ref{thm-1} after replacing the Langlands parameters $\lambda$ and $\nu$ by the eigenvalues $a=\rho^2-\lambda^2$ and $b=\rho_H^2-\nu^2$. Corollary~\ref{cor:Main} easily follows from Theorem~\ref{thm:Main} by fixing one of the Maass forms.

\appendix

\titleformat{\section}{\sc\center}{}{0.5em}{} 
\renewcommand{\theequation}{A.\arabic{equation}}
\setcounter{equation}{0}

\section{Appendix A. Integral formulas}

For $\alpha,\beta>0$ and $0<\Re\lambda<2\Re(\mu+\nu)$ we have (see \cite[equation 3.259~(3)]{GR07})
\begin{equation}
 \int_0^\infty x^{\lambda-1} (1+\alpha x^2)^{-\mu} (1+\beta x^2)^{-\nu}\,dx = \frac{1}{2}\alpha^{-\frac{\lambda}{2}}B\left(\frac{\lambda}{2},\mu+\nu-\frac{\lambda}{2}\right){_2F_1}\left(\nu,\frac{\lambda}{2};\mu+\nu;1-\frac{\beta}{\alpha}\right).\label{eq:IntFormulaMellinTrafoOfSphericalProduct}
\end{equation}
The Euler transformation formula holds (see \cite[equation (2.2.7)]{AAR99}):
\begin{equation}
 {_2F_1}(\alpha,\beta;\gamma;x) = (1-x)^{\gamma-\alpha-\beta}{_2F_1}(\gamma-\alpha,\gamma-\beta;\gamma;x).\label{eq:EulerTransformationFormula}
\end{equation}
The Euler integral representation holds for $\Re(\gamma-\beta),\Re\beta>0$ (see \cite[equation (2.3.17)]{AAR99}):
\begin{equation}
 {_2F_1}(\alpha,\beta;\gamma;1-x) = \frac{\Gamma(\gamma)}{\Gamma(\gamma-\beta)\Gamma(\beta)}\int_0^\infty t^{\beta-1}(1+t)^{\alpha-\gamma}(1+xt)^{-\alpha} dt.\label{eq:EulerIntegralRepresentation}
\end{equation}
The following integral formula holds for $0\leq\beta<1$ and $0<\Re\mu<\Re(\lambda-2\nu)$ (see \cite[equation 3.254~(2)]{GR07} for $\lambda=0$ and $u=1$):
\begin{equation}
 \int_1^\infty (x-1)^{\mu-1}(x^2+\beta)^\nu dx = \frac{\Gamma(\mu)\Gamma(-\mu-2\nu)}{\Gamma(-2\nu)}{_2F_1}\left(-\frac{\mu}{2}-\nu,\frac{1-\mu}{2}-\nu;\frac{1}{2}-\nu;-\beta\right).\label{eq:IntRepresentation3F2}
\end{equation}
Using Pfaff's transformation formula (see \cite[equation (2.2.6)]{AAR99})
\begin{equation*}
 {_2F_1}(\alpha,\beta;\gamma;x) = (1-x)^{-\alpha} {_2F_1}\left(\alpha,\gamma-\beta;\gamma;\frac{x}{x-1}\right)
\end{equation*}
and the following integral formula which holds for $\Re\rho,\Re\sigma,\Re(\gamma+\sigma-\alpha-\beta)>0$ (see \cite[equation 7.512~(5)]{GR07})
\begin{equation*}
 \int_0^1 x^{\rho-1}(1-x)^{\sigma-1}{_2F_1}(\alpha,\beta;\gamma;x) dx = \frac{\Gamma(\rho)\Gamma(\sigma)}{\Gamma(\rho+\sigma)}{_3F_2}(\alpha,\beta,\rho;\gamma,\rho+\sigma;1)
\end{equation*}
it is easy to see by a simple substitution that for $\Re\rho,\Re(\alpha-\sigma-\rho+1),\Re(\beta-\sigma-\rho+1)>0$ we have
\begin{equation}
 \int_0^\infty x^{\rho-1}(1+x)^{\sigma-1}{_2F_1}(\alpha,\beta;\gamma;-x) dx = \frac{\Gamma(\rho)\Gamma(\alpha-\sigma-\rho+1)}{\Gamma(\alpha-\sigma+1)}{_3F_2}(\alpha,\gamma-\beta,\rho;\gamma,\alpha-\sigma+1;1).\label{eq:IntegralHypergeometric}
\end{equation}
For $\Re(\gamma-\alpha-\beta)>0$ we have the {Gau\ss} special value (see \cite[Theorem 2.2.2]{AAR99})
\begin{equation}
 {_2F_1}(\alpha,\beta;\gamma;1) = \frac{\Gamma(\gamma)\Gamma(\gamma-\alpha-\beta)}{\Gamma(\gamma-\alpha)\Gamma(\gamma-\beta)}.\label{eq:SpecialValueHypergeometric}
\end{equation}
For $\Re\mu,\Re\nu>0$ we have (see \cite[equation 3.621~(5)]{GR07})
\begin{equation}
 \int_0^{\frac{\pi}{2}} \sin^{\mu-1}\theta \cos^{\nu-1}\theta d\theta = \frac{1}{2}B\left(\frac{\mu}{2},\frac{\nu}{2}\right).\label{eq:SineCosineIntegral}
\end{equation}

\providecommand{\bysame}{\leavevmode\hbox to3em{\hrulefill}\thinspace}
\providecommand{\MR}{\relax\ifhmode\unskip\space\fi MR }
\providecommand{\MRhref}[2]{%
  \href{http://www.ams.org/mathscinet-getitem?mr=#1}{#2}
}
\providecommand{\href}[2]{#2}

\begin{flushleft}\small
Department Mathematik, FAU Erlangen-N\"{u}rnberg\\
Cauerstr. 11, 91058 Erlangen, Germany\\
\texttt{moellers@math.fau.de}

Institute of Mathematics, AMSS, Chinese Academy of Sciences\\
Zhongguancun East Road 55, 100190 Beijing, China\\
\texttt{fsu@amss.ac.cn}
\end{flushleft}

\end{document}